\newtheorem{theorem}{Theorem}
\newtheorem{lemma}{Lemma}
\newtheorem{remark}{Remark}
\newcommand{\ZZ}{\mathbb{Z}}
\newcommand{\RR}{\mathbb{R}}
\newcommand{\CC}{\mathbb{C}}
\newcommand{\ba}{\mathbf{a}}
\title{{\large On the symmetry of finite sums of exponentials II}}
\author{Florian Pausinger}
\address{FP: TWT GmbH Science \& Innovation, Ernsthaldenstr. 17, 70565 Stuttgart, Germany, and School of Mathematics \& Physics, Queen's University Belfast, BT7 1NN, Belfast, United Kingdom.}
\email{ext.florian.pausinger@twt-gmbh.de, f.pausinger@qub.ac.uk}
\author{Dimitris Vartziotis}
\address{DV: TWT GmbH Science \& Innovation, Ernsthaldenstr. 17, 70565 Stuttgart, Germany, and NIKI Ltd. Digital Engineering, Research Center, Ethnikis Antistasis 205, 45500 Katsikas, Ioannina, Greece.}
\email{dimitris.vartziotis@twt-gmbh.de, dimitris.vartziotis@nikitec.gr}
\date{}
\begin{document}

\maketitle


\begin{abstract}
In this note we extend our study of the rich geometry of the graph of a curve defined as the weighted sum of two exponentials. Let $\gamma_{a,b}^{s}: [0,1] \rightarrow \mathbb{C}$ be defined as
$$\gamma_{a,b}^s(t) = (1-s) \exp(2 \pi i a t) + (1+s) \exp(2\pi i b t) $$
in which $1\leq a < b$ are two positive integers and $s \in [-1,1]$.
In the first part we determined the symmetry groups of the graphs of $\gamma_{a,b}:=\gamma_{a,b}^0$. The main aim of this note is to study the continuous transition of the graph of the curve when $s$ changes from $-1$ to $1$. 
As a main result we determine the winding numbers $\mathrm{wind}(\gamma_{a,b}^s,0)$ for $s \in [-1,1] \setminus \{ 0 \}$ as well as the set of cusp points of each such curve. This sheds further light on our initial symmetry result and provides more non-trivial albeit easy-to-state examples of advanced concepts of geometry and topology.
\end{abstract}


\section{Introduction}

Parametrised curves play a crucial role in applied mathematics as a tool to describe trajectories of objects. In the context of dynamical systems and bifurcation theory curves and, in particular, singular points of curves, are a versatile tool to describe complex behavior or change of behavior of a system. The abundance of applications of curves and manifolds motivated the development of a rich theory known as singularity theory \cite{arnold, bruce}. Seminal contributions came from H. Whitney \cite{whitney} who studied critical points of mappings as well as R. Thom \cite{zeeman} who developed catastrophe theory. J. Callahan gives a very readable introduction into these fascinating topics in his two papers on singularities and plane maps \cite{callahan, callahan2}.
The aim of our work is to give simple, but non-trivial illustrations of complicated concepts from geometry and topology. In the spirit of our first paper \cite{paper1} in which we gave examples of graphs with arbitrary symmetry groups, we focus now on generating curves with an arbitrary number of cusp points. Cusp points are an important type of singularity and often signify a transition or bifurcation in a system as we also see in our examples. We believe it is important to develop a good understanding of such singularities and, therefore, we provide simple and accessible toy models to illustrate this fundamental concept.

To introduce our model, let $\ba = (a_1, \ldots, a_m)$ denote a vector of positive integers $a_1, \ldots, a_m$ with $m\geq 2$ and let $\gamma_{\ba}: \RR \rightarrow \CC$ be the (closed) curve defined as
\begin{equation} \label{def}
\gamma_{\ba}(t) = \exp(2\pi i a_1 t) + \ldots + \exp(2\pi i a_m t) = \sum_{j=1}^m \exp(2\pi i a_j t).
\end{equation}
The function $\gamma_{\ba}$ is one-periodic, i.e., $\gamma_{\ba}(t)= \gamma_{\ba}(t+1)$ for all $t \in \RR$, since $\exp(2 \pi i a t)$, $t \in [0,1]$, $a \in \ZZ$, is a circle in the complex plane.
Moreover, the integers in $\ba$ need not be distinct. If an integer appears more than once in $\ba$ then $\gamma_{\ba}$ corresponds to a weighted sum of exponentials (with integer weights).
The graph of $\gamma_{\ba}$ can get quite involved for an arbitrary choice of parameters; see Figure \ref{fig1}. In a recent paper also infinite sums of such exponentials have been studied \cite{dv}.

\begin{figure}[h!]
\begin{center}
\includegraphics[scale=0.5]{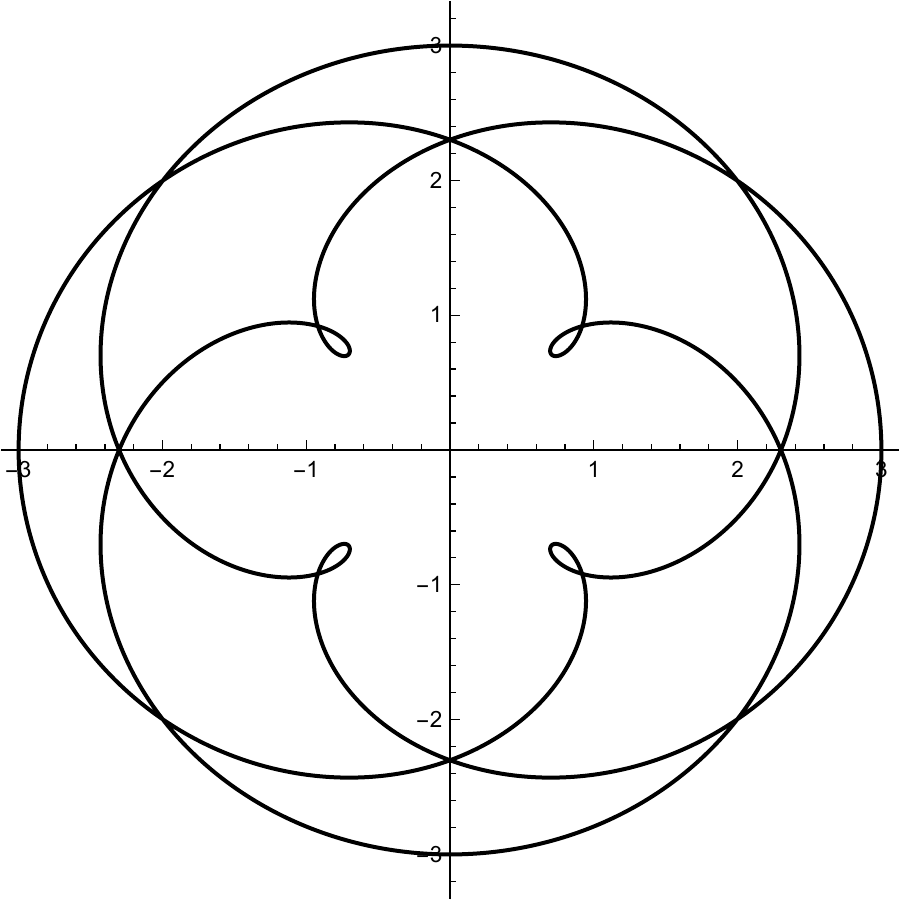}\
\includegraphics[scale=0.5]{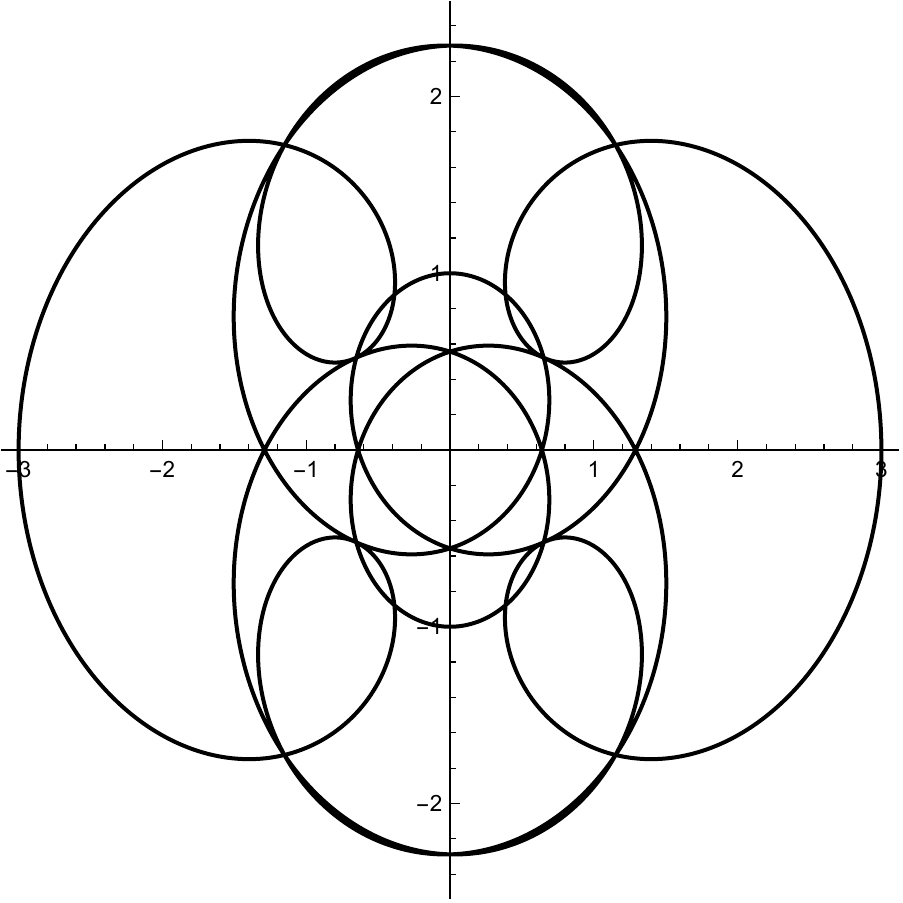} \
\includegraphics[scale=0.5]{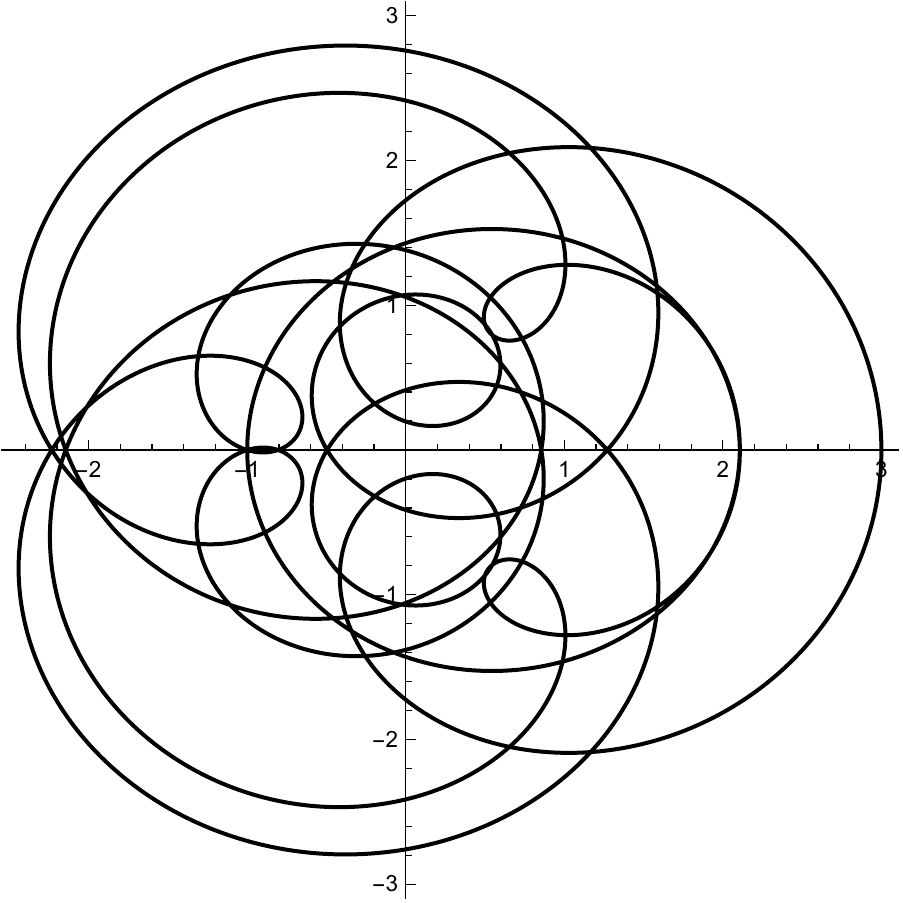} \
\end{center}
\caption{Illustration of different graphs. From left to right: $\gamma_{3,3,7}, \gamma_{5,15,45}$ and $\gamma_{2,7,13}$.} \label{fig1}
\end{figure}

In \cite{paper1} we focused on the simplest case of graphs of sums of two exponentials. We were able to determine the symmetry groups of such graphs and find all points of self intersection. Interestingly, we could show that all points of self intersection lie at equidistant parameter values $t$; i.e. $\gamma_{a,b}(t) = \gamma_{a,b}(t')$ if and only if there are integers $0 \leq j, j' \leq b^2-a^2$ with $t=j/(b^2-a^2)$ and $t'=j'/(b^2-a^2)$. 

We extend our study by introducing continuous weights. Let $s \in [-1,1]$ be a real parameter. For integers $1\leq a < b$ and a real number $t \in [0,1]$ we define a family of curves
\begin{equation} \label{def2}
\gamma_{a,b}^s (t) = (1-s) \exp(2\pi i a t) + (1+s) \exp(2\pi i b t).
\end{equation}
Note that $\gamma_{a,b} (t) = \gamma_{a,b}^0 (t)$, i.e. we define a continuous transition from $2 \exp(2\pi i a t)$ via $\gamma_{a,b}(t)$ to $2 \exp(2\pi i b t)$ such that the sum of weights is always 2. See Figure \ref{fig2} for an illustration of $\gamma_{1,3}^s(t)$.

\begin{figure}[h!]
\begin{center}
\includegraphics[scale=0.25]{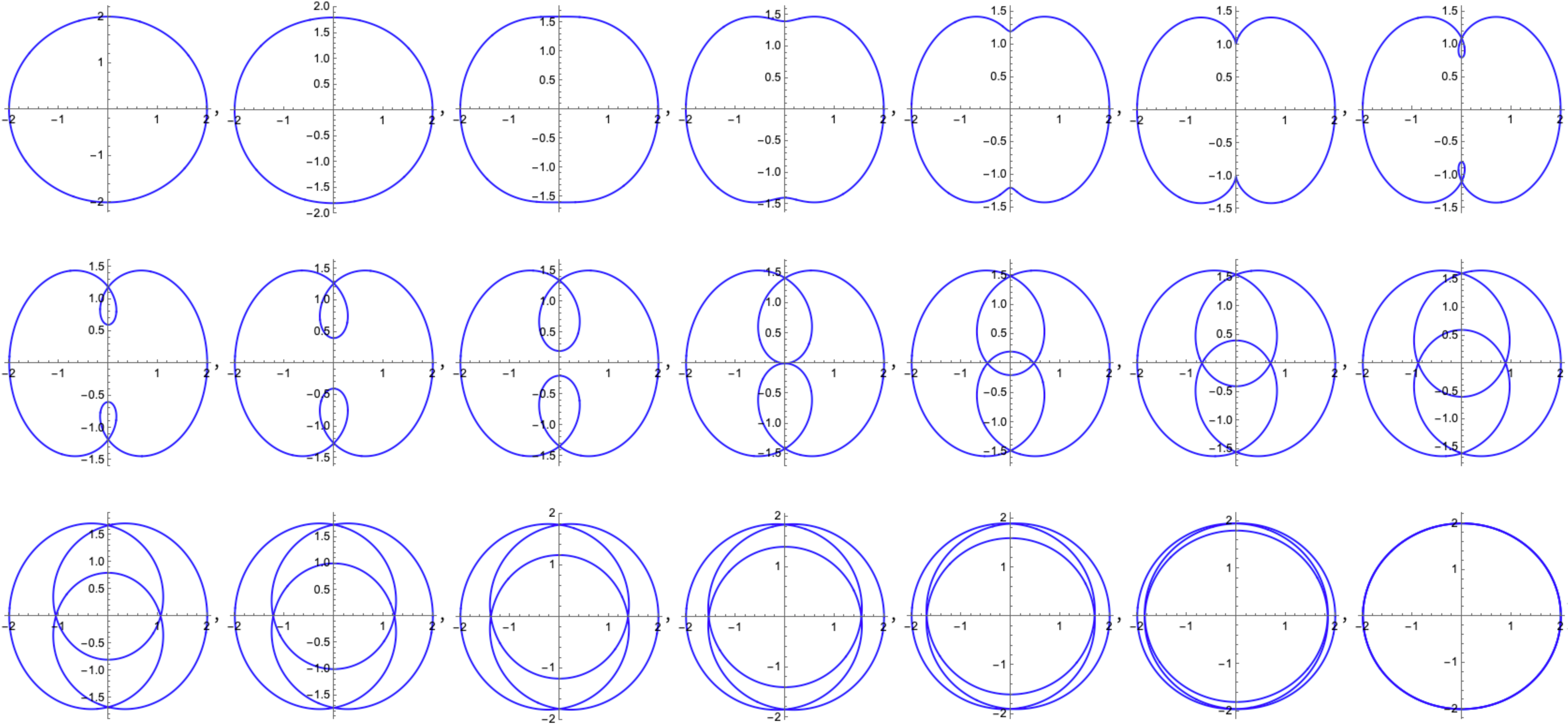}
\end{center}
\caption{Illustration of $\gamma_{1,3}^s(t)$ for $s=-1, -0.9, \ldots, 0.9, 1$.} \label{fig2}
\end{figure}
 
Our main aim is to illustrate the concept of \emph{winding numbers} as well as \emph{cusp points}. For given parameters $a$ and $b$, we first determine the winding numbers $\mathrm{wind}(\gamma_{a,b}^s, 0)$ of the corresponding curves $\gamma_{a,b}^s$. 

We recall from \cite[Lemma 2]{paper1} that $\gamma_{a,b}(t)=0$ if and only if $t=\frac{h}{2(b-a)}$ for odd integers $h \in [0, \ldots, 2(b-a)]$. 
This shows that the graph of every curve $\gamma_{a,b}$ crosses the origin $b-a$ times. 
Now, looking at Figure \ref{fig2}, we notice that $\gamma_{a,b}^s(t)$ can only be zero when $s=0$. 
This motivates us to place ourselves in the origin and investigate the winding number of every curve $\gamma_{a,b}^s(t)$ with $s \in [-1,1] \setminus \{ 0\}$; see also Figure \ref{fig3}.

\begin{theorem} \label{thm:wind}
For integers $1\leq a <b$, we have that $\mathrm{wind}(\gamma_{a,b}^s, 0) = a$ for $s \in [-1,0)$ and $\mathrm{wind}(\gamma_{a,b}^s, 0) = b$ for $s \in (0,1]$.
\end{theorem}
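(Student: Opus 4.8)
The plan is to view $\gamma_{a,b}^s$ as a perturbation of one of its two terms and apply the ``dog on a leash'' form of the homotopy invariance of the winding number. Fix $s \in [-1,1]\setminus\{0\}$. As recalled just before the theorem, $\gamma_{a,b}^s$ never passes through the origin: if it did, then $(1-s)\exp(2\pi i a t) = -(1+s)\exp(2\pi i b t)$ for some $t$, and comparing moduli would force $|1-s| = |1+s|$, i.e.\ $s=0$. Hence $\mathrm{wind}(\gamma_{a,b}^s,0)$ is well defined, e.g.\ as the degree of $t \mapsto \gamma_{a,b}^s(t)/|\gamma_{a,b}^s(t)|$ from $\RR/\ZZ$ to the unit circle, equivalently as $\frac{1}{2\pi i}\oint_{\gamma_{a,b}^s}\frac{dz}{z}$.

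Suppose first $s \in [-1,0)$. I would split $\gamma_{a,b}^s(t) = p(t) + q(t)$ with $p(t) = (1-s)\exp(2\pi i a t)$ and $q(t) = (1+s)\exp(2\pi i b t)$, so that $|p(t)| = 1-s$ and $|q(t)| = 1+s$ for every $t$. Since $s<0$ we have $1+s < 1-s$, hence $|\gamma_{a,b}^s(t) - p(t)| = |q(t)| < |p(t)|$ uniformly in $t$. By the dog-on-a-leash principle (the winding-number corollary of Rouch\'e's theorem), $\gamma_{a,b}^s$ and $p$ have the same winding number about $0$. But $p$ is the circle of radius $1-s>0$ traversed $a$ times, so $\mathrm{wind}(p,0)=a$, and therefore $\mathrm{wind}(\gamma_{a,b}^s,0)=a$. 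The case $s \in (0,1]$ is symmetric: now $1-s < 1+s$, so $q$ is the dominant term, $|\gamma_{a,b}^s(t) - q(t)| = |p(t)| < |q(t)|$ for all $t$, and $\mathrm{wind}(\gamma_{a,b}^s,0) = \mathrm{wind}(q,0) = b$.

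The argument is genuinely short, so I do not expect a serious obstacle; the one point worth emphasising is \emph{why the two regimes cannot be bridged}. At $s=0$ the dominance inequality degenerates to the equality $|p(t)|=|q(t)|$, and $\gamma_{a,b}=\gamma_{a,b}^0$ does meet the origin (precisely $b-a$ times, by \cite[Lemma 2]{paper1}), which is exactly the mechanism by which the winding number drops from $b$ to $a$ as $s$ decreases through $0$. If one prefers to avoid Rouch\'e altogether, the identical conclusion follows from homotopy invariance of the winding number applied to the continuous family $s \mapsto \gamma_{a,b}^s$ on each of the connected intervals $[-1,0)$ and $(0,1]$ — on which $H(s,t)=\gamma_{a,b}^s(t)$ is a continuous map into $\CC\setminus\{0\}$ by the non-vanishing observation above — together with evaluation at the degenerate endpoints $\gamma_{a,b}^{-1}(t) = 2\exp(2\pi i a t)$ and $\gamma_{a,b}^{1}(t) = 2\exp(2\pi i b t)$, whose winding numbers about $0$ are $a$ and $b$ respectively.
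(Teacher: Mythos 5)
Your proof is correct, but it takes a genuinely different route from the one in the paper. The paper evaluates the defining integral $\frac{1}{2\pi i}\int_0^1 \gamma'(t)/\gamma(t)\,dt$ head-on: it splits the integrand into the two summands coming from $a$ and $b$, reduces each to an integral of the form $\int_0^1 \big((1\mp s)+(1\pm s)e^{2\pi i(b-a)t}\big)^{-1}dt$, and evaluates these via a separate residue-theorem lemma (Lemma~\ref{lem:integral}), which returns $1/\beta$ or $0$ according to which coefficient is larger. You instead decompose the curve itself as $p+q$ and invoke the dog-on-a-leash principle (equivalently, homotopy invariance along $s\in[-1,0)$ or $(0,1]$ down to the degenerate circles at $s=\pm1$), so no integral is ever computed. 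The two arguments hinge on the same dichotomy --- which of $1-s$, $1+s$ dominates --- but yours is shorter, avoids the auxiliary lemma entirely, and makes the mechanism of the jump at $s=0$ (loss of strict dominance, the curve meeting the origin) conceptually transparent; the paper's computation has the minor virtue of exhibiting the two contributions $a(1-s)\cdot\frac{1}{1-s}$ and $b(1+s)\cdot 0$ (resp.\ the other way around) explicitly. Your preliminary observation that $\gamma_{a,b}^s$ misses the origin exactly when $s\neq 0$ (by comparing moduli) is a point the paper asserts only by appeal to a figure, so your write-up is in that respect slightly more complete.
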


\begin{figure}[h!]
\begin{center}
\includegraphics[scale=0.5]{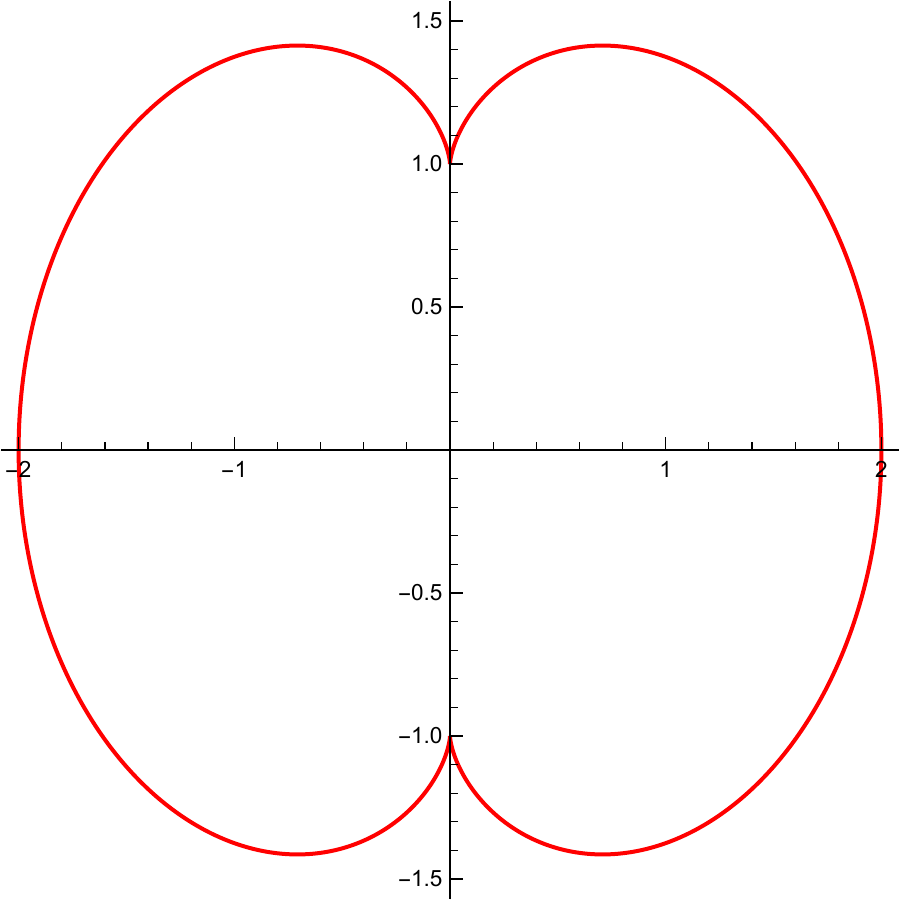}
\includegraphics[scale=0.5]{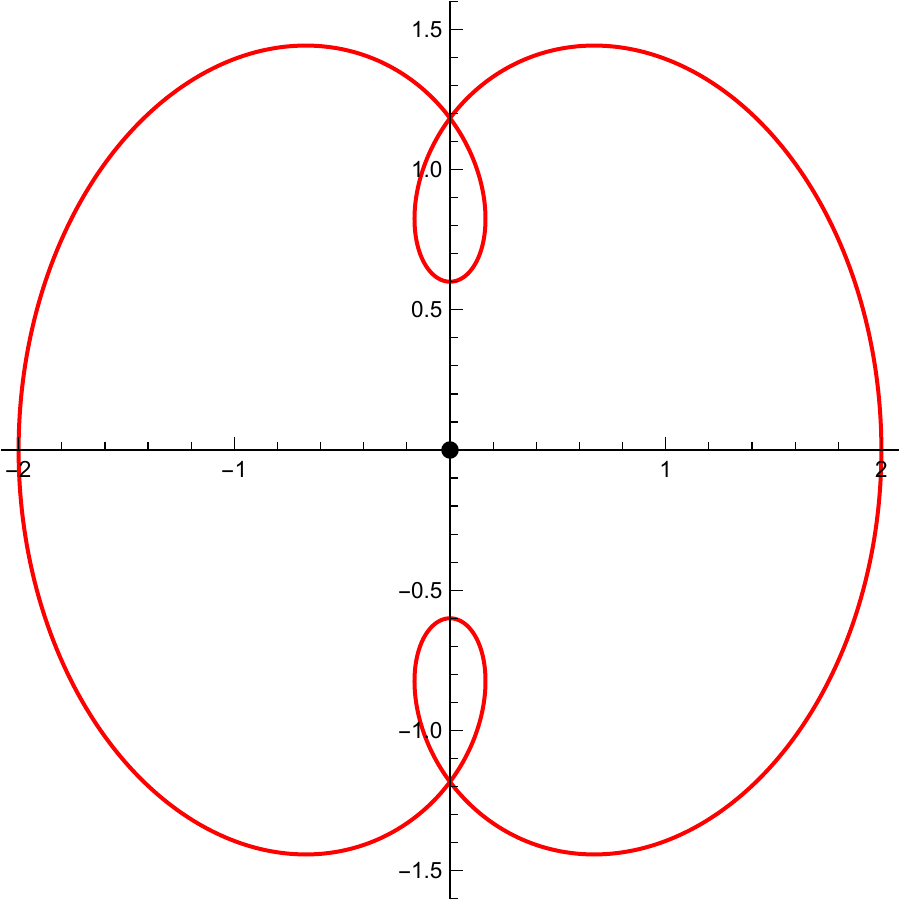}
\includegraphics[scale=0.5]{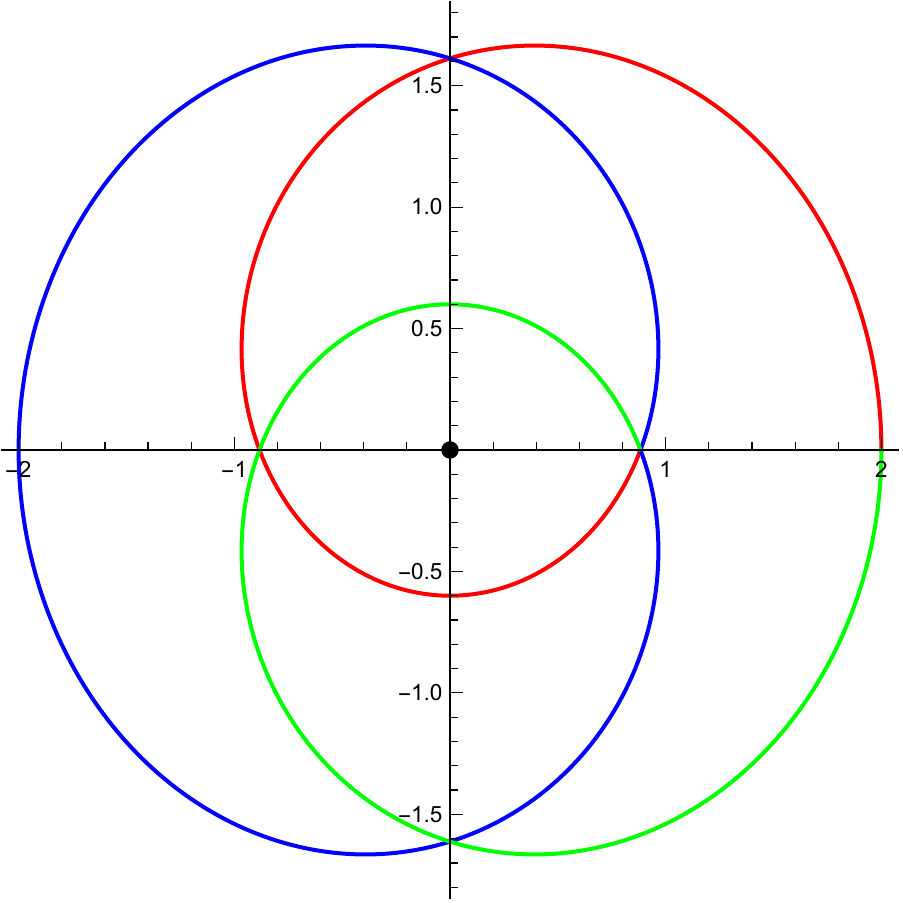}
\end{center}
\caption{Illustration of the cusp points of $\gamma_{1,3}^{-0.5}$ (left) as well as the winding numbers $\mathrm{wind}(\gamma_{1,3}^{-0.3},0)=1$ (middle) and $\mathrm{wind}(\gamma_{1,3}^{0.3},0)=3$ (right).} \label{fig3}
\end{figure}

Furthermore, looking again at Figure \ref{fig2}, we see that there exists a particular parameter $\bar{s}$ at which double points are born. Such points are called cusp points and as a second main result we determine the numbers $\bar{s}=\bar{s}(a,b)$ for given integers $a,b$ together with the parameters $t$ at which these cusp points sit.

In particular, we study the special case $a=1$ and $b=3$ in Section \ref{sec:example} and prove:
\begin{theorem}\label{thm:example}
For $a=1$, $b=3$ and $s\in [-1,1]$, the curve $\gamma_{a,b}^s$ has a cusp point if and only if $s=-1/2$ and $t=1/4$ or $t=3/4$.
\end{theorem}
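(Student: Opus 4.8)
The plan is to use the standard fact that, for a real-analytic curve $\gamma$, a necessary condition for $t_0$ to be a cusp point of $\gamma$ is $\gamma'(t_0)=0$, and then to pin down exactly when this occurs for $\gamma_{1,3}^s$ and to check that the resulting points are genuine cusps.

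First I would dispose of the endpoints $s=\pm1$: here $\gamma_{1,3}^{-1}(t)=2\exp(2\pi i t)$ and $\gamma_{1,3}^{1}(t)=2\exp(6\pi i t)$ are round circles traversed at constant nonzero speed, so their velocity never vanishes and they have no cusp. For $s\in(-1,1)$ both coefficients $1-s$ and $1+s$ are strictly positive, and
\[
(\gamma_{1,3}^s)'(t)=2\pi i\bigl[(1-s)\exp(2\pi i t)+3(1+s)\exp(6\pi i t)\bigr].
\]
Setting this equal to zero and comparing absolute values forces $1-s=3(1+s)$, i.e.\ $s=-\tfrac12$ (which is $\bar s(1,3)$, fitting the general pattern $\bar s(a,b)=(a-b)/(a+b)$). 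Substituting $s=-\tfrac12$, the velocity equation collapses to $\exp(2\pi i t)=-\exp(6\pi i t)$, i.e.\ $\exp(4\pi i t)=-1$, whose only solutions in $[0,1)$ are $t=\tfrac14$ and $t=\tfrac34$. This settles the ``only if'' direction and isolates the sole candidates.

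It then remains to verify that at $s=-\tfrac12$ the values $t=\tfrac14$ and $t=\tfrac34$ really give ordinary cusps rather than a smooth point or a degenerate higher-order singularity. I would apply the usual $(2,3)$-test, using
\[
(\gamma_{1,3}^s)''(t)=(2\pi i)^2\bigl[(1-s)\exp(2\pi i t)+9(1+s)\exp(6\pi i t)\bigr],\qquad (\gamma_{1,3}^s)'''(t)=(2\pi i)^3\bigl[(1-s)\exp(2\pi i t)+27(1+s)\exp(6\pi i t)\bigr].
\]
Evaluating at $s=-\tfrac12$ and $t=\tfrac14$ (with $\exp(\pi i/2)=i$, $\exp(3\pi i/2)=-i$) yields $(\gamma_{1,3}^{-1/2})''(\tfrac14)=12\pi^2 i\neq0$ and $(\gamma_{1,3}^{-1/2})'''(\tfrac14)=-96\pi^3\in\RR$, so the second- and third-derivative vectors are linearly independent over $\RR$ and $t=\tfrac14$ is an ordinary cusp (located at the point $i\in\CC$). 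For $t=\tfrac34$ one can repeat the computation, or simply note that $\gamma_{1,3}^{-1/2}(1-t)=\overline{\gamma_{1,3}^{-1/2}(t)}$, so complex conjugation composed with $t\mapsto1-t$ carries the cusp at $t=\tfrac14$ to one at $t=\tfrac34$ (located at $-i$).

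The only delicate point is the bookkeeping around the word ``cusp'': the computation above shows that $(s,t)=(-\tfrac12,\tfrac14)$ and $(-\tfrac12,\tfrac34)$ are the \emph{only} parameters at which $\gamma_{1,3}^s$ fails to be an immersion, and the last paragraph --- the main, if still routine, calculation --- confirms that these two are honest cusps, which is needed since a vanishing velocity by itself does not force a cusp.
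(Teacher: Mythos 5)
Your proposal is correct, and it reaches the result by a genuinely different and in some ways cleaner route than the paper. Where you work with the complex velocity $(\gamma_{1,3}^s)'(t)=2\pi i\bigl[(1-s)e^{2\pi i t}+3(1+s)e^{6\pi i t}\bigr]$ and locate all singular parameters at once by taking moduli (forcing $1-s=3(1+s)$, hence $s=-\tfrac12$, and then $e^{4\pi i t}=-1$), the paper instead works in real coordinates with the parametric derivative $y'_s(t)/x'_s(t)$, simplifies it to a cotangent expression, and runs a case analysis over $s=-1$, $s=1$, $s=-\tfrac12$, $s\in(-1,-\tfrac12)$ and $s\in(-\tfrac12,1)$, using the range of $\arccos\bigl(\tfrac{-2-s}{3(1+s)}\bigr)$ to decide where the derivative is undefined and then checking whether $y'_s$ also vanishes there. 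Your modulus argument replaces that entire case analysis with two lines and makes the ``only if'' direction more transparent. For the ``if'' direction the paper verifies the cusp by computing one-sided limits of the unit tangent vector (obtaining $(0,\pm1)$) and by inspecting the birth of a double point in the coordinate plots, whereas you apply the $(2,3)$-jet test, checking $\gamma''(\tfrac14)=12\pi^2 i\neq0$ and $\gamma'''(\tfrac14)=-96\pi^3$ linearly independent over $\RR$; your test is the more standard and more decisive criterion for an \emph{ordinary} cusp, and it rules out higher-order degeneracies that the tangent-reversal check alone would not distinguish. Your symmetry observation $\gamma_{1,3}^{-1/2}(1-t)=\overline{\gamma_{1,3}^{-1/2}(t)}$ to transfer the result from $t=\tfrac14$ to $t=\tfrac34$ matches the paper's appeal to symmetry. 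The two arguments are both complete; yours is shorter and generalises more readily (the modulus condition immediately yields $\bar s(a,b)=\tfrac{a-b}{a+b}$, which the paper only conjectures in general).
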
 
This result can be generalised. We state our observation for the general case without proof, because the calculations quickly get tedious and do not add any new ideas. We observe that for integers $1\leq a <b$ and real $s\in [-1,1]$, the curve $\gamma_{a,b}^s$ has a cusp point if and only if $s=\frac{a-b}{a+b}$ and $t=\frac{h}{2(b-a)}$ for odd integers $h \in [0, \ldots, 2(b-a)]$.

The main aim of this note is to give simple, but non-trivial examples of curves with arbitrarily many cusp points. For this reason, we provide a proof of the following general statement in Section \ref{sec:general}, which can be seen as a partial proof of the above observation and which shows how to generate curves with arbitrarily many cusp points.

\begin{theorem} \label{thm:general}
Let $a=1$ and $b$ be an integer with $a<b$. If $s=\frac{a-b}{a+b}$, then the curve $\gamma_{a,b}^s$ has a cusp point whenever $t=\frac{h}{2(b-a)}$ for odd integers $h \in [0, \ldots, 2(b-a)]$.
\end{theorem}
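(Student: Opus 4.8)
The plan is to work directly with the parametrisation. With $a=1$ and $s=\frac{1-b}{1+b}$ we have $1-s=\frac{2b}{b+1}$ and $1+s=\frac{2}{b+1}$, so up to the positive constant $\frac{2}{b+1}$ the curve is
$$\gamma(t) = b\exp(2\pi i t) + \exp(2\pi i b t).$$
A cusp point of a smooth plane curve $\gamma$ occurs at a parameter value $t_0$ where $\gamma'(t_0)=0$ but the curve is not locally constant there; to confirm it really is an (ordinary) cusp one checks that $\gamma'(t_0)=0$, $\gamma''(t_0)\neq 0$, and that $\gamma''(t_0)$ and $\gamma'''(t_0)$ are linearly independent over $\RR$ (equivalently $\mathrm{Im}(\overline{\gamma''(t_0)}\gamma'''(t_0))\neq 0$), which gives the standard local model $(\tau^2,\tau^3)$ after an affine change of coordinates. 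So the first step is to compute $\gamma'(t) = 2\pi i\big(b\exp(2\pi i t) + b\exp(2\pi i b t)\big) = 2\pi i\, b\big(\exp(2\pi i t)+\exp(2\pi i bt)\big)$ and observe that this vanishes precisely when $\exp(2\pi i t) = -\exp(2\pi i bt)$, i.e. when $bt - t \equiv \tfrac12 \pmod 1$, that is $t = \frac{h}{2(b-1)}$ for odd $h$. This already matches the claimed parameter values and shows these are exactly the singular points of the parametrisation.

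The second step is to verify the cusp conditions at such a $t_0=\frac{h}{2(b-1)}$. Differentiating again, $\gamma''(t) = (2\pi i)^2\big(b\exp(2\pi i t) + b^2\exp(2\pi i bt)\big)$ and $\gamma'''(t) = (2\pi i)^3\big(b\exp(2\pi i t) + b^3\exp(2\pi i bt)\big)$. Writing $u=\exp(2\pi i t_0)$ and $v=\exp(2\pi i b t_0)$, the singularity relation is $u=-v$, hence $\gamma''(t_0) = (2\pi i)^2 b\,u(1-b)$ and $\gamma'''(t_0) = (2\pi i)^3 b\,u(1-b^2)$. Since $b>1$ and $u\neq 0$ these are both nonzero. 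For the cusp (non-degeneracy) condition one computes $\overline{\gamma''(t_0)}\,\gamma'''(t_0)$: the scalar factors give $\overline{(2\pi i)^2 b(1-b)}\cdot(2\pi i)^3 b(1-b^2) = (2\pi)^5 i\, b^2 (b-1)(1-b^2)$ times $\bar u u = 1$, which is a nonzero purely imaginary number, so its imaginary part is nonzero and $\gamma''(t_0),\gamma'''(t_0)$ are $\RR$-linearly independent. This is exactly the condition that $t_0$ is an ordinary cusp, so the curve genuinely has cusp points there.

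I would organise the writeup so that the routine differentiation is stated compactly and the only substantive line is the evaluation of $\gamma'',\gamma'''$ under $u=-v$ and the sign check on $\mathrm{Im}(\overline{\gamma''}\gamma''')$. The main obstacle — and the reason the general theorem in the excerpt is only stated without proof — is the "only if" direction and the bookkeeping of which values of $s$ can produce $\gamma'(t)=0$ at all: for general $a$ one must solve $a(1-s)\exp(2\pi i a t) + b(1+s)\exp(2\pi i b t)=0$, which forces both a modulus condition pinning down $s=\frac{a-b}{a+b}$ and an argument condition pinning down $t$; here, because $a=1$, the modulus condition is automatically arranged by our choice of $s$ and only the clean argument condition remains, which is why this case is tractable. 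A secondary point worth a remark is that one should note the curve is actually $C^\infty$ (indeed real-analytic) everywhere, so "cusp" refers to a singularity of the image/trace, not a failure of smoothness of the parametrisation; the local-model verification via $\gamma'',\gamma'''$ is what makes this precise, and I would cite the standard characterisation from the singularity-theory references already in the bibliography (\cite{bruce}, \cite{callahan}).
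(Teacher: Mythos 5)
Your proposal is correct, and it takes a genuinely different (and in fact tighter) route than the paper. The paper first invokes the $D_{b-a}$ symmetry to restrict to $t\in[0,1/(b-a)]$, rotates the curve by $\exp(i\phi)$ with $\phi=\pi/2-\pi/(b-a)$ so that the suspected cusp lands on the $y$-axis, computes the parametric derivative $y'_s/x'_s=-\tan(\pi(\tfrac{1}{b-a}-(a+b)t))$, checks that it is undefined with $y'_s=0$ at $t=\tfrac{1}{2(b-a)}$, and then confirms the cusp by showing the one-sided limits of the unit tangent are $(0,\pm 1)$. You instead exploit the algebraic heart of the matter directly: at $s=\tfrac{a-b}{a+b}$ the two coefficients in $\gamma'$ coincide, so $\gamma'(t)=2\pi i\,b\bigl(\exp(2\pi i t)+\exp(2\pi i b t)\bigr)$ vanishes exactly at $t=\tfrac{h}{2(b-1)}$ with $h$ odd, and you then certify an ordinary cusp via the standard criterion $\gamma'=0$, $\gamma''\neq 0$, $\mathrm{Im}(\overline{\gamma''}\gamma''')\neq 0$ from the singularity-theory literature. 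This buys several things: you need no rotation and no appeal to symmetry, you identify \emph{all} singular parameters of this curve in one line (a step toward the ``only if'' the paper omits), and your cusp verification is the rigorous local-model criterion rather than the paper's somewhat informal limit of unit tangent vectors. Two small points: in computing $\overline{\gamma''(t_0)}\,\gamma'''(t_0)$ the real factor should be $(1-b)(1-b^2)=(1-b)^2(1+b)$ rather than $(b-1)(1-b^2)$ --- harmless, since only nonvanishing of the imaginary part matters; and your closing remark that the coefficient-matching is special to $a=1$ is not quite right, since $a(1-s)=b(1+s)$ holds at $s=\tfrac{a-b}{a+b}$ for every $a$, so your argument in fact proves the ``if'' direction of the paper's general observation for all $1\leq a<b$, not just $a=1$.
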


Our results shed further light on our symmetry result in \cite{paper1}. In particular, we see that the winding number of a single exponential of the form $\exp(2\pi k t)$ for an integer $1 \leq k$ is $k$. Now it turns out that when we continuously transition from $2\exp(2\pi a t)$ to $2\exp(2\pi b t)$ there is always a parameter $\bar{s}$ at which exactly $b-a$ many loops are born. Observing from the origin, these loops keep growing and enable the jump in the winding number of the curve when transitioning from a value $s<0$ to a value $s>0$.

\section{Preliminaries}

\subsection{Symmetry}
We observe that the arguments used to prove \cite[Corollary 1]{paper1} carry over verbatim. On the one hand, the weights $1-s$ and $1+s$ have no influence on the congruences used to establish the rotational symmetry of the graphs. On the other hand, the weights have also no influence on the argument concerning the $b-a$ points of maximal distance to the origin. Hence, we can conclude:

\begin{lemma}
For coprime integers $1 \leq a < b$ and $s \in (-1,1)$ the symmetry group of the graph of $\gamma_{a,b}^s$ is $D_{b-a}$, i.e. the dihedral group of order $2(b-a)$.
\end{lemma}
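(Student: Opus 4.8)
The plan is to show that the proof of \cite[Corollary 1]{paper1} goes through verbatim once the unit weights there are replaced by $1-s$ and $1+s$. That proof splits into a lower bound, exhibiting $D_{b-a}$ as a subgroup of the symmetry group, and an upper bound, ruling out any further isometry; I would treat the two halves separately, the only real task being to confirm that the weights, entering merely as positive real scalars, never interact with the relevant congruences or extremal computations.

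For the lower bound, the rotational part comes from noting that for $\tau=k/(b-a)$ with $k\in\ZZ$ one has $a\tau\equiv b\tau\pmod 1$, so
\[
\gamma_{a,b}^s(t+\tau)=(1-s)e^{2\pi i a\tau}e^{2\pi i at}+(1+s)e^{2\pi i b\tau}e^{2\pi i bt}=e^{2\pi i ak/(b-a)}\,\gamma_{a,b}^s(t);
\]
hence rotation about the origin through $2\pi ak/(b-a)$ preserves the graph, and since $\gcd(a,b)=1$ forces $\gcd(a,b-a)=1$, these angles realise the full cyclic group $C_{b-a}$. The reflectional part is immediate from $\overline{\gamma_{a,b}^s(t)}=\gamma_{a,b}^s(-t)$, so complex conjugation preserves the graph as well. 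Together these generate a copy of $D_{b-a}$, and the weights $1\pm s$ have cancelled out of every congruence, exactly as the excerpt claims.

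For the upper bound I would reuse the argument of \cite{paper1} about the points of the graph at maximal distance from the origin. Because $s\in(-1,1)$ both weights are strictly positive, so the triangle inequality gives $|\gamma_{a,b}^s(t)|\le(1-s)+(1+s)=2$ with equality precisely when $e^{2\pi iat}=e^{2\pi ibt}$, i.e. when $(b-a)t\in\ZZ$; thus the $b-a$ extremal points are exactly the vertices $\{2e^{2\pi iak/(b-a)}:0\le k<b-a\}$ of a regular $(b-a)$-gon, just as in the case $s=0$, and the argument that any symmetry of the graph must permute these vertices and therefore lie in $D_{b-a}$ carries over word for word. Combining the two bounds yields the lemma. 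I do not expect a genuine obstacle here, since everything reduces to the bookkeeping observation that the weights are irrelevant; the one point worth flagging is why $s$ must be restricted to the open interval: at $s=\pm1$ one term vanishes and the graph degenerates to a circle traversed $a$ or $b$ times, whose symmetry group is all of $O(2)$, so the statement fails at the endpoints and the positivity of both weights is precisely what step two needs.
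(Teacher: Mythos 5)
Your proposal is correct and follows essentially the same route as the paper, which simply observes that the two halves of the argument from \cite[Corollary 1]{paper1} (the congruences giving the rotations, and the $b-a$ points of maximal distance to the origin giving the upper bound) are unaffected by the positive weights $1\pm s$. Your write-up just fills in the details the paper leaves implicit, including the correct reason for excluding $s=\pm 1$.
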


\subsection{Winding number}
The winding number $\mathrm{wind}(\gamma, z_0)$ of a \emph{closed} curve $\gamma$ in the plane around a given point $z_0$ is an integer representing the total number of times a curve travels counter-clockwise around the point. The sign of the winding number indicates the orientation of the curve, i.e. a negative winding number means traveling clockwise around a point.
Illustrating the importance of the concept, there exist different (but equivalent!) definitions in different parts of mathematics. We use the definition from complex analysis. 
If $\gamma$ is a closed curve in the complex plane parametrized by $t \in [0,1]$, the winding number of $\gamma$ around a point $z_0$ is defined for complex $z_0 \notin \gamma([0,1])$ as
$$ \mathrm{wind}(\gamma, z_0) = \frac{1}{2 \pi i} \int_0^1 \frac{\gamma'(t)}{\gamma(t) - z_0} dt. $$

It can be shown \cite[Theorem 10.10]{rudin} that the winding number is integer-valued, constant over each maximal connected subset of $\Omega=\mathbb{C} \setminus \gamma([0,1])$ and zero if $z_0$ is in the unbounded component of $\Omega$.

\subsection{Singular points of a curve in the plane}
A parametrised curve $\gamma$ in $\mathbb{R}^2$ is a function $\gamma: \mathbb{R} \rightarrow \mathbb{R}^2$ with $\gamma(t)=(x(t), y(t))$. 
The curve is closed if $\gamma(0)=\gamma(1)$.
The vector 
$$\gamma'(t) =  \left( \frac{d}{dt} x(t), \frac{d}{dt} y(t) \right)$$
is the tangent vector of the curve $\gamma$ in $t$.
Points $t \in [0,1]$ with $\gamma'(t)=0$ are called singular, those with $\gamma'(t)\neq0$ are called regular.
The parametric derivative of a curve is defined as 
$$\frac{y'(t)}{x'(t)} \quad \text{ subject to } x'(t) \neq 0.$$
Note that the parametric derivative is undefined at a singular point $t$. In particular, the parametric derivative is undefined whenever $x'(t)=0$, i.e., whenever we have a vertical tangent. If $y'(t)=0$ and $x'(t) \neq 0$, i.e., if we have a horizontal tangent, then the parametric derivative is zero.

\section{Winding numbers}

In order to prove Theorem \ref{thm:wind} we need the following lemma which is an application of the Residue Theorem.

\begin{lemma} \label{lem:integral} For real numbers $\alpha, \beta$, with $\beta >0$ we have that
$$ \int_0^1 \frac{1}{\beta + \alpha \cdot \exp(2 \pi i t)} dt = \begin{cases} \frac{1}{\beta} & \text{if} \ \ \beta > |\alpha | \\ 0 & \text{if} \ \ \beta < |\alpha| \end{cases}.$$
\end{lemma}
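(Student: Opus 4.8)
The statement is a clean residue-calculus exercise, so the plan is to compute the integral
$I(\alpha,\beta)=\int_0^1 \frac{dt}{\beta+\alpha\exp(2\pi i t)}$
by turning it into a contour integral over the unit circle. Substituting $z=\exp(2\pi i t)$, so that $dz = 2\pi i\, z\, dt$ and $t$ running over $[0,1]$ corresponds to $z$ tracing the positively oriented unit circle $\partial\mathbb{D}$ once, gives
$I(\alpha,\beta) = \frac{1}{2\pi i}\oint_{\partial\mathbb{D}} \frac{dz}{z(\beta+\alpha z)}.$
The integrand is a rational function whose only possible poles are at $z=0$ and at $z=-\beta/\alpha$ (the latter only when $\alpha\neq 0$).

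The key steps are then: first, do a partial fraction decomposition,
$\frac{1}{z(\beta+\alpha z)} = \frac{1}{\beta}\cdot\frac{1}{z} - \frac{\alpha}{\beta}\cdot\frac{1}{\beta+\alpha z},$
which makes the residues transparent. Second, apply the Residue Theorem: the residue at $z=0$ is $1/\beta$, contributing $1/\beta$ to $I$; the pole at $z=-\beta/\alpha$ lies strictly inside $\partial\mathbb{D}$ precisely when $|\beta/\alpha|<1$, i.e.\ when $\beta<|\alpha|$ (using $\beta>0$), and in that case its residue is $-1/\beta$, cancelling the first contribution and giving $I=0$. When $\beta>|\alpha|$ that second pole is outside the disk (or, if $\alpha=0$, there is no second pole at all and the formula $1/\beta$ is immediate), so only the $z=0$ residue counts and $I=1/\beta$. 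Third, one should note that the degenerate case $|\alpha|=\beta$, where the pole sits on the contour, is exactly the one excluded from the statement, so nothing more needs to be said there.

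The only genuine care-points — rather than obstacles — are bookkeeping ones: checking that the substitution traverses $\partial\mathbb{D}$ exactly once with positive orientation (it does, since $a=1$ in the exponent), confirming that $z=0$ is a genuine simple pole of the integrand (it is, as $\beta\neq 0$), and handling the sub-case $\alpha=0$ separately since then $-\beta/\alpha$ is not defined. I expect no real difficulty here; the lemma is essentially a one-line residue computation dressed up for later use in evaluating $\mathrm{wind}(\gamma_{a,b}^s,0)$, where each summand $\gamma_{a,b}^{s\,\prime}(t)/\gamma_{a,b}^s(t)$ will, after factoring out the dominant exponential, be brought to exactly this form with $\beta$ and $\alpha$ built from $1\pm s$.
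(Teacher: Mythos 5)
Your argument is correct and is, at heart, the same residue-theorem computation as the paper's, but the setup differs enough to be worth noting. You use the standard substitution $z=e^{2\pi i t}$, which turns the integral into $\frac{1}{2\pi i}\oint_{|z|=1}\frac{dz}{z(\beta+\alpha z)}$ over the unit circle; you then need the partial fraction decomposition to read off two residues (at $z=0$ and at $z=-\beta/\alpha$), and the dichotomy $\beta\gtrless|\alpha|$ appears as whether the second pole lies inside the unit disk, in which case its residue $-1/\beta$ cancels the contribution $1/\beta$ from the origin. The paper instead works backwards: it integrates the single-pole function $f(z)=1/(\alpha+z)$ over the circle $|z|=\beta$ and recognizes the lemma's integral inside that contour integral, so the dichotomy appears directly as whether the unique pole $z=-\alpha$ is enclosed by the circle of radius $\beta$, with no partial fractions needed. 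Your route is the more standard textbook one and has the small advantage of handling $\alpha=0$ cleanly as a separate trivial case (which the paper glosses over); the paper's route avoids the pole at the origin altogether and is set up to accommodate a parametrization winding $b$ times around the circle. Both yield the stated values, and you correctly observe that the boundary case $\beta=|\alpha|$ is excluded from the statement.
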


\begin{proof}
Let us integrate the complex function
$$ f(z) = \frac{1}{\alpha+z}$$
over the the curve $|z| = \beta$. For an integer $b \neq 0$ and real $\beta >0$, we can parametrize the curve using
$$ \gamma(t) = \beta e^{2 \pi i b t} \qquad \mbox{for}~0 \leq t \leq 1.$$
Note that this curve wraps $b$ times around the origin (the sign of $b$ indicates the orientation, i.e. clockwise or counter-clockwise).

Then
$$ \int_{|z| = \beta} f(z) = \int_0^1 f(\gamma(t)) \gamma'(t) dt = \int_0^1 \frac{1}{\alpha + \beta e^{2 \pi i b t}} \beta 2 \pi i b e^{2\pi i b t} dt.$$
This integral can also be written as
$$  \int_0^1 \frac{1}{\alpha + \beta e^{2 \pi i b t}} \beta 2 \pi i b e^{2\pi i b t} dt =  2 \pi i b \beta \int_0^1 \frac{1}{\alpha e^{-2b\pi i t}+ \beta }   dt$$
We can now evaluate 
$$ \int_{|z| = \beta} f(z)  \qquad \mbox{using the residue theorem.}$$
The function $f$ has a single residue in $z = -\alpha$.  This means that if $\beta > |\alpha|$, then the circular integral captures the residue. 
We have that $\mathrm{Res}(f,-\alpha)=1$ and $\mathrm{wind}(\gamma, -\alpha)=b$. Hence, by the residue theorem
$$ \int_{|z| = \beta} f(z) = 2 \pi i b \Rightarrow \frac{1}{\beta} = \int_0^1 \frac{1}{\alpha e^{-2b\pi i t}+ \beta } dt.$$

If $\beta < |\alpha|$, then the curve does not enclose anything and the integral evaluates to 0.
\end{proof}

\begin{proof}[Proof of Theorem \ref{thm:wind}]
The winding number about $z_0 \notin \gamma([0,1])$ of a curve $\gamma=\gamma_{a,b}^s$ parametrized by $t \in [0,1]$ is defined as
$$\mathrm{wind}(\gamma, z_0) = \frac{1}{2 \pi i} \int_{0}^1 \frac{\gamma'(t)}{\gamma(t) - z_0} dt.$$
Setting $z_0 =0$ and using the definition of $\gamma_{a,b}^s$ we get for $s \in [-1,1] \setminus \{ 0\}$:
\begin{align*}
\mathrm{wind}(\gamma,0) &= \frac{1}{2 \pi i} \int_{0}^1 \frac{(1-s) 2\pi i a \exp(2\pi i a t) + (1+s) 2\pi i b \exp(2\pi i b t)}{(1-s) \exp(2\pi i a t) + (1+s) \exp(2\pi i b t)} dt \\
&= \int_{0}^1 \frac{(1-s) a \exp(2\pi i a t)}{(1-s) \exp(2\pi i a t) + (1+s) \exp(2\pi i b t)} dt +\int_{0}^1 \frac{ (1+s) b \exp(2\pi i b t)}{(1-s) \exp(2\pi i a t) + (1+s) \exp(2\pi i b t)} dt \\
&=  a (1-s) \int_{0}^1 \frac{1}{(1-s) + (1+s) \exp(2\pi i (b-a) t)} dt \\ 
& \quad \quad \quad + b (1+s) \int_{0}^1 \frac{ 1}{(1-s) \exp(2\pi i (a-b) t) + (1+s)} dt .
\end{align*}
Applying Lemma \ref{lem:integral} gives the desired result: 
If $(1-s)>(1+s)$, i.e., if $s \in [-1,0)$, we get that
$$\mathrm{wind}_{\gamma}(0) = a (1-s) \frac{1}{1-s}=a.$$
Otherwise, $(1+s)>(1-s)$i.e., if $s \in (0,1]$, we get that 
$$\mathrm{wind}_{\gamma}(0) = b (1+s) \frac{1}{1+s}=b.$$
\end{proof}

\section{Cusp points - the case $a=1$, $b=3$}
\label{sec:example}

In this section we look at the special case $a=1$ and $b=3$ (see Figure \ref{fig3}) and prove Theorem \ref{thm:example}. Working in Euclidean space, we have that $\gamma_{1,3}^s(t) = (x_s(t),y_s(t))$ with
\begin{align*}
x_s(t)&=(1-s) \cos(2\pi t) + (1+s) \cos(6 \pi t)\\
y_s(t)&=(1-s) \sin(2\pi t) + (1+s) \sin(6 \pi t)
\end{align*}
such that
\begin{align*}
x'_s(t)&=-2\pi (1-s) \sin(2\pi t) - 6\pi (1+s) \sin(6 \pi t)\\
y'_s(t)&= 2\pi (1-s) \cos(2\pi t) + 6\pi (1+s) \cos(6 \pi t).
\end{align*}
Singular points of parametrised curves can be studied via the parametric derivative. 
The parametric derivative is undefined at singular points.
In our case we get
\begin{align*}
\frac{y'_s(t)}{x'_s(t)} &= \frac{2\pi (1-s) \cos(2\pi t) + 6\pi (1+s) \cos(6 \pi t)}{-2\pi (1-s) \sin(2\pi t) - 6\pi (1+s) \sin(6 \pi t)}.
\end{align*}
We start our analysis of the parametric derivative by considering three special cases. First set $s=-1$. In this case 
\begin{align*}
\frac{4\pi \cos(2\pi t)}{-4\pi \sin(2\pi t)} & = -\cot(2\pi t).
\end{align*}
Hence, the derivative is undefined if and only if $t=0, 1/2, 1$.
However, $y'_s(t) \neq 0$ at all three values and, hence, we conclude that the curve does not have a singular point at the three values but simply a vertical tangent.
Similarly we get for $s=1$
\begin{align*}
\frac{ 12\pi \cos(6 \pi t)}{ - 12\pi \sin(6 \pi t)} &= -\cot(6\pi t).
\end{align*}
In this case, the derivative is undefined if and only of $t=0, 1/6, \ldots, 5/6, 1$.
Again, $y'_s(t) \neq 0$ at all seven values and, hence, we conclude that the curve does not have a singular point but simply at vertical tangent.

As a third special case we look at $s=-0.5$. In this case we have that
\begin{align*}
\frac{3\pi \cos(2\pi t) + 3\pi \cos(6 \pi t)}{-3\pi \sin(2\pi t) - 3\pi \sin(6 \pi t)} &=\frac{ \cos(2\pi t) + \cos(6 \pi t)}{- \sin(2\pi t) - \sin(6 \pi t)}\\
&  = - \frac{2\cos\left( \frac{2\pi t +6\pi t}{2} \right)\cos \left( \frac{2\pi t -6\pi t}{2} \right)}{2\sin\left( \frac{2\pi t +6\pi t}{2} \right)\cos \left( \frac{2\pi t -6\pi t}{2} \right)} = -\cot(4\pi t).
\end{align*}
Here the derivative is undefined if and only if $t=0, 1/4, 1/2, 3/4, 1$; see also Figure \ref{paramDerivative}.
Moreover, we have that $y'_s(1/4)=y'(3/4)=0$. Hence, there are two singular points at $t=1/4$ and $t=3/4$; see Figure \ref{fig3} (left).

Next we consider the parametric derivative for $s \in (-1,-0.5)$. Note that we can rewrite the parametric derivative as
$$\frac{1+2s-3(s+1) \cos(4\pi t)}{2+s+3(s+1)\cos(4\pi t)} \cot(2 \pi t). $$
Because of the factor $\cot(2 \pi t)$, the derivative is undefined for $t=0,1/2,1$.
In addition the derivative is undefined whenever 
$$2+s+3(s+1)\cos(4\pi t)=0.$$
We can rewrite this equation and get
$$ 4 \pi t = \arccos\left (\frac{-2-s}{3(1+s)}\right).$$
Note that the arccosine is the inverse function of the cosine in the interval $[0,\pi]$. Hence, we have to restrict to $t \in [0, 1/4]$. Each solution $\bar{t}$ gives now four solutions in $[0,1]$ due to the symmetry of the cosine; i.e. $\bar{t}, 1/2-\bar{t}, 1/2+\bar{t}, 1-\bar{t}$.

\begin{figure}[h!]
\begin{center}
\includegraphics[scale=0.5]{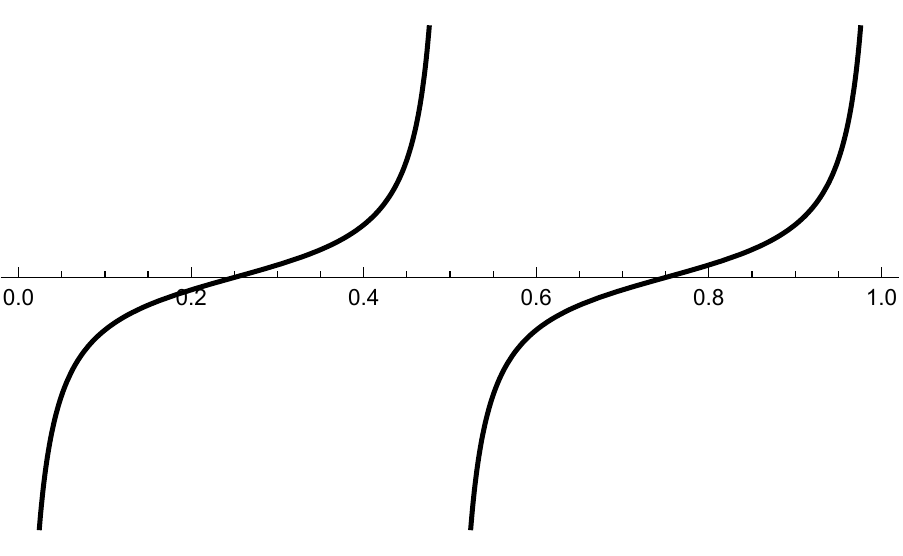} \quad
\includegraphics[scale=0.5]{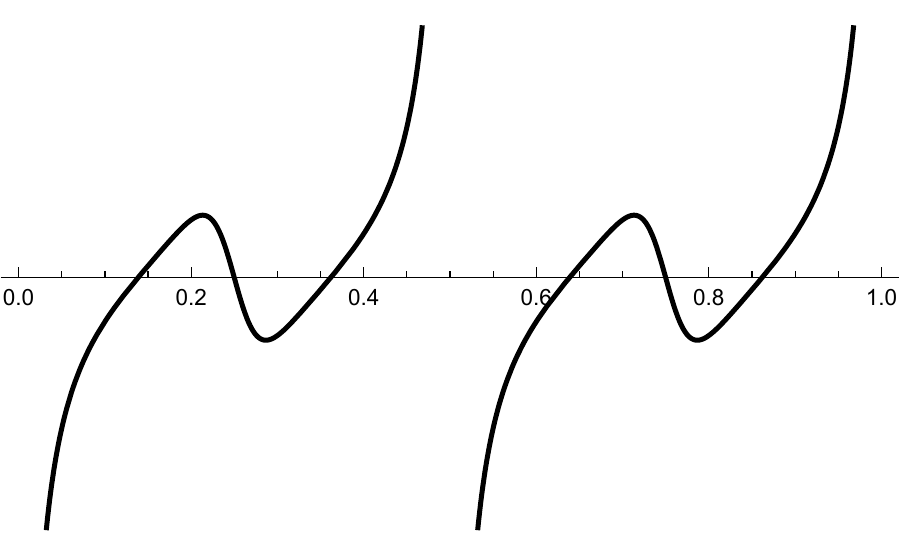} \quad
\includegraphics[scale=0.5]{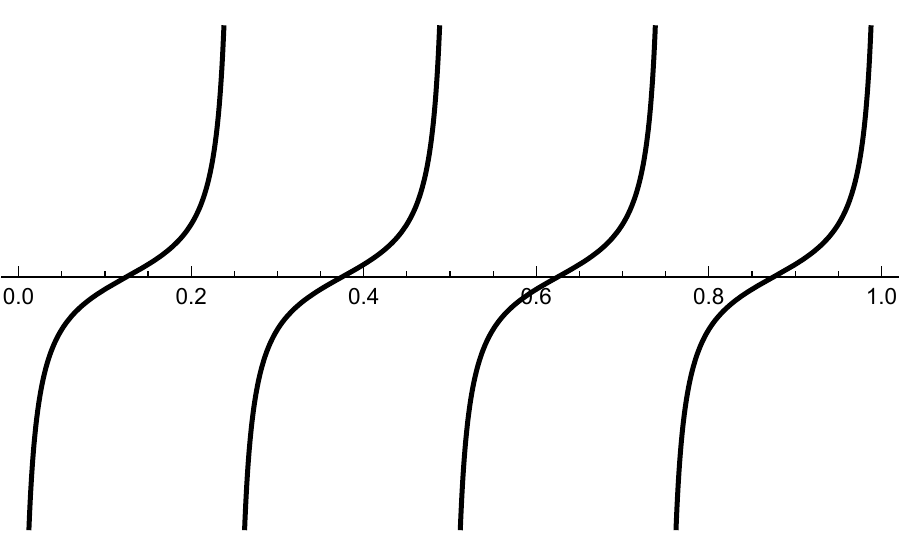} \\
\includegraphics[scale=0.5]{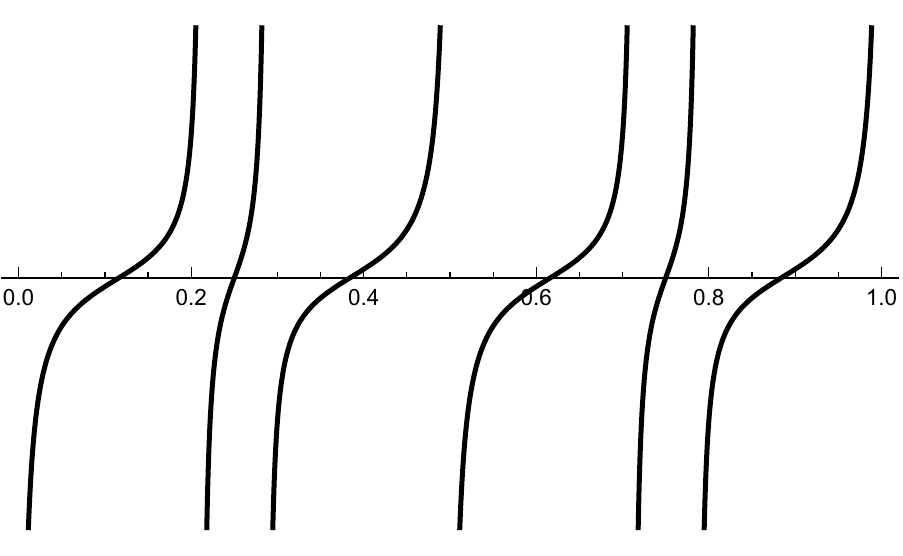}\quad
\includegraphics[scale=0.5]{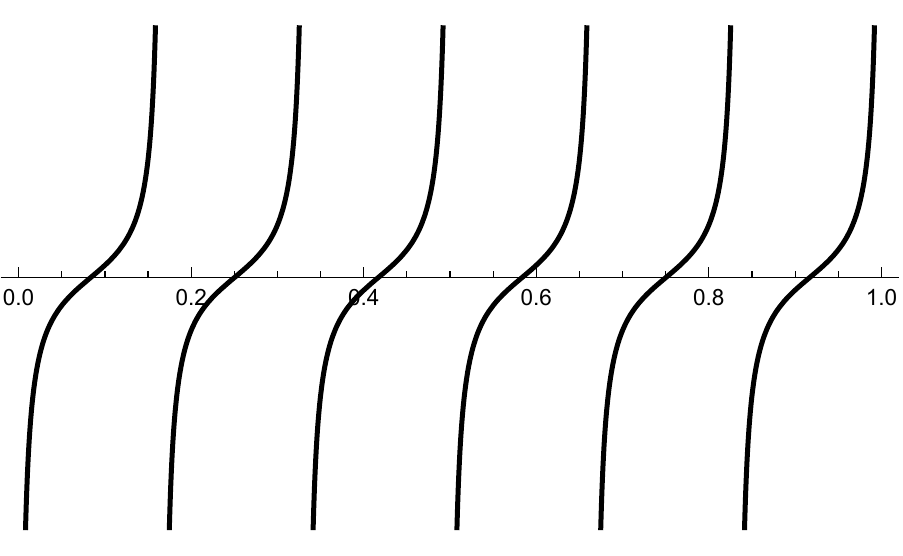}
\end{center}
\caption{Illustration of the parametric derivative of the curves $\gamma_{a,b}^s$ with $s=-1,-0.6, -0.5, 0.6, 1$} \label{paramDerivative}
\end{figure}

Moreover, the argument of the arccosine is by definition in $[-1,1]$. Therefore, we have that
$$-1 \leq \frac{-2-s}{3(1+s)} \leq 1$$
from this
$$-1 \leq \frac{-2-s}{3(1+s)} \Rightarrow - \frac{1}{2} \leq s.$$
Hence, there is no additional critical value for $s \in (-1,-0.5)$; see also Figure \ref{plot:argument}.

\begin{figure}[h!]
\begin{center}
\includegraphics[scale=0.7]{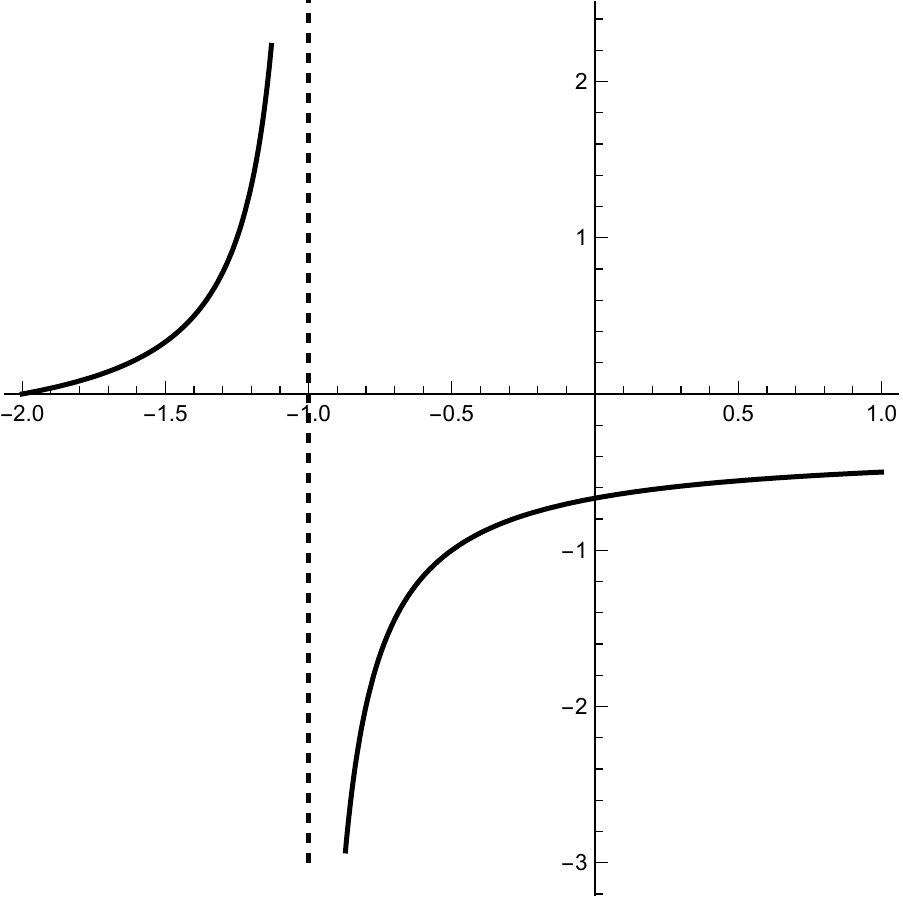}
\end{center}
\caption{Illustration of the function $\frac{-2-s}{3(s+1)}$.} \label{plot:argument}
\end{figure}

Finally, we turn to $s \in (-0.5, 1)$. In this case we obtain one solution, i.e.
$$ 2+s+3(s+1)\cos(4\pi t)=0 \Leftrightarrow t= \frac{\arccos\left (\frac{-2-s}{3(1+s)}\right)}{4 \pi}$$
and this implies that we have four additional points at which the derivative is not defined for every $s \in (-0.5,1)$. We can check for each of these additional points that $y'(t)\neq 0$. Hence, we found additional points with a vertical tangent. We summarise our findings in a plot in Figure \ref{plot:overview}.

\begin{figure}[h!]
\begin{center}
\includegraphics[scale=0.75]{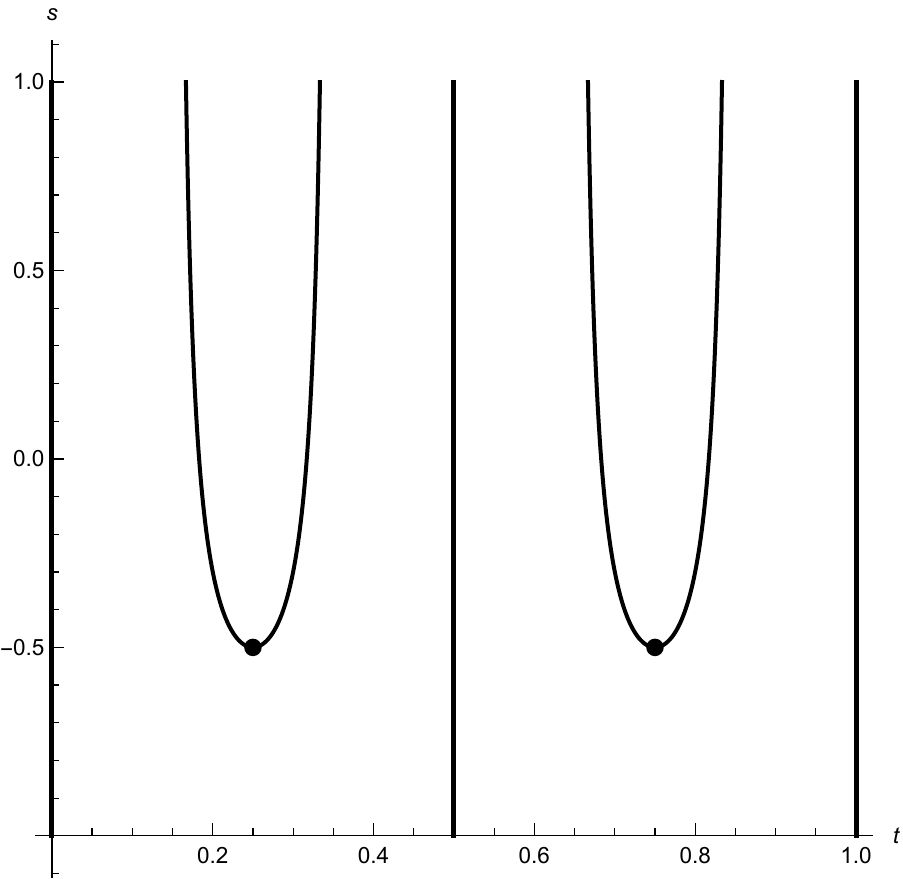}
\end{center}
\caption{Illustration of the points (black lines) of $\gamma_{1,3}^s$ at which the parametric derivative is not defined including the two cusp points (bold black points).} \label{plot:overview}
\end{figure}

We found two singular points, i.e., $(s,t)=(-0.5, 0.25)$ and $(s,t)=(-0.5,0.75)$. To confirm that these points are indeed cusp points of the curve $\gamma^{-0.5}_{1,3}$ we look at the unit tangent vectors at these points when we approach $t$ from above and from below. In case of a cusp point these vectors should have opposite sign.
Indeed, we have that
\begin{align*}
\lim_{t\rightarrow (0.25)^+} &\frac{(\gamma_{1,3}^{-0.5})'(t)}{\| (\gamma_{1,3}^{-0.5})'(t)\|} = (0,1),\\
\lim_{t\rightarrow (0.25)^-} &\frac{(\gamma_{1,3}^{-0.5})'(t)}{\| (\gamma_{1,3}^{-0.5})'(t)\|} = (0,-1).\\
\end{align*}
By symmetry, we get a similar result for $(s,t)=(-0.5,0.75)$.
Hence, we conclude that these singular points are indeed cusp points.

We separately analyse the graphs of the $x$- and $y$-coordinates of the curve $\gamma_{a,b}^s$ in a small neighborhood of $t=0.25$; see Figure \ref{coords}. We observe that the graph of the $x$-coordinate has three zeros and two local extrema for $-0.5<s$ while it only has one zero for $s \leq -0.5$. This change of sign in the $x$-coordinate finally shows that a loop is born when $s$ passes the threshold $-0.5$ and confirms the existence of a double point, i.e., a point of self intersection, whenever $-0.5<s$.

\begin{figure}[h!]
\begin{center}
\includegraphics[scale=0.5]{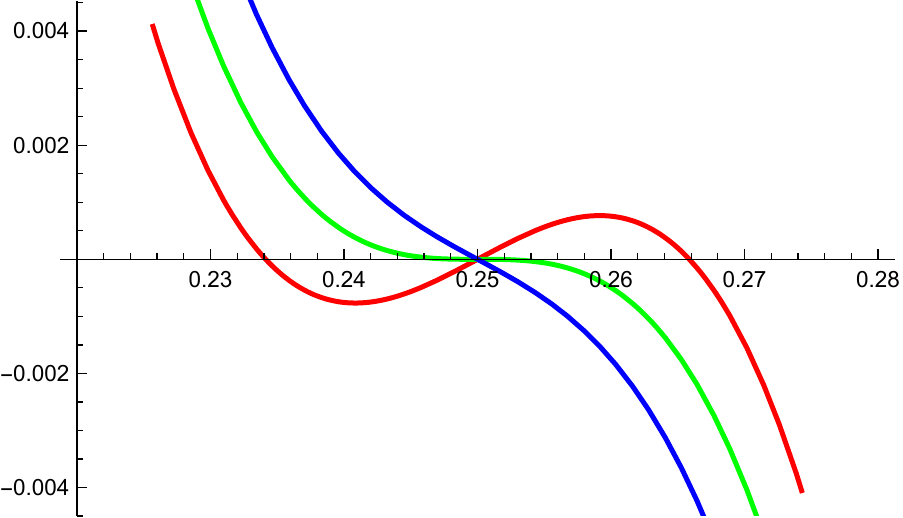} \qquad
\includegraphics[scale=0.5]{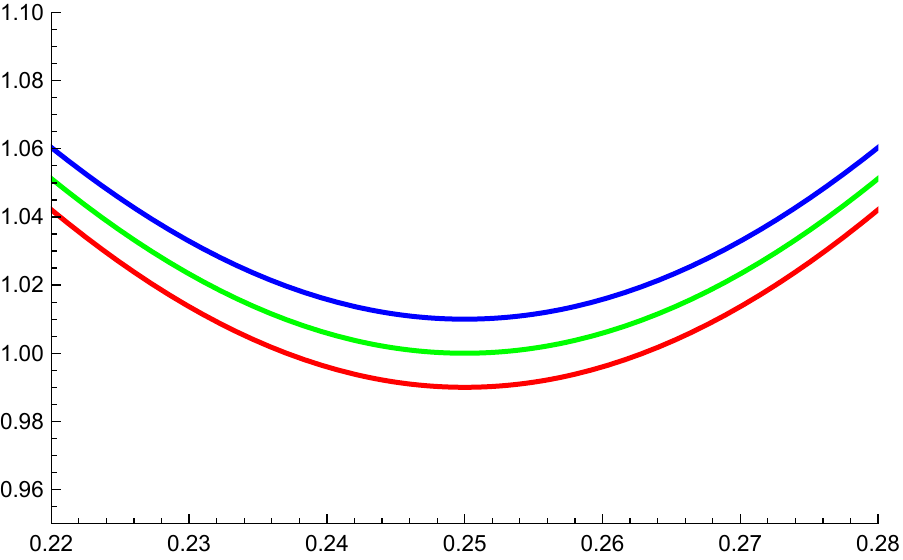}
\end{center}
\caption{Illustration of $x$-component (left) and $y$-component (right) of $\gamma_{1,3}^s(t)$ for $t \in [0.22, 0.28]$ and $s=-0.495$ (red), $s=-0.5$ (green) and $s=-0.505$ (blue).} \label{coords}
\end{figure}

This proves Theorem \ref{thm:example}.

\section{Cusp points - the general case}
\label{sec:general}

In this section we study the general case and prove Theorem \ref{thm:general}.
Geometrically, we can argue in a similar fashion in the general case as in the previous example. However, the argument becomes more delicate because the cusp points as well as the double points generally do not lie on the $y$-axis. Hence, it is harder to (formally) detect them since we do not have a vertical or horizontal tangent at those points.
In the following we extend the analysis of the previous section by adding an additional, first step. Since we know the symmetry group, i.e. $D_{b-a}$, of the graph of every curve $\gamma_{a,b}^s$ for integers $0<a<b$ and $s\in [-1,1]$, it is sufficient to only study the curves in the parameter interval $t \in [0, 1/(b-a)]$. If we find a cusp point in this interval, then every interval $[j/(b-a), (j+1)/(b-a)]$ contains a cusp point by symmetry.
Moreover, by inspection, we conjecture that the cusp points of $\gamma_{a,b}^s$ are located at
$$(s,t)=\left( \frac{a-b}{a+b}, \frac{j}{2(b-a)} \right).$$
Therefore, in order to detect them with our methods, we first rotate the graph of $\gamma_{a,b}^s$ such that the suspected cusp points lie on the $y$-axis. In a second step, we analyse the graphs exactly as in the previous section.

\begin{remark}
Note that Whitney developed a much more powerful theory in which we could view our curves $\gamma_{a,b}^s$ as a map from a manifold (with or without boundary), in our case a cylinder identified with $[-1,1] \times [0,1]$, to $\mathbb{R}^2$; see \cite{whitney} and also \cite{callaghan} for a very nice introduction. However, this goes beyond the scope of this note.
\end{remark}

In our first example we had $b-a=3-1=2$ and, hence, the two cusp points were lying exactly on the $y$-axis. If $b-a\geq 3$ we rotate the graph by the angle
$$\phi = \frac{\pi}{2} - \frac{2\pi}{2(b-a)}= \pi \left(\frac{1}{2} - \frac{1}{b-a} \right), $$
i.e., by $\pi/2$ minus the angle between the horizontal axis and the first cusp point at $t=1/(2(b-a))$; see Figure \ref{rotation}.
This rotation can be achieved by using the complex representation of our curves and multiplying with $\exp(i \phi)$ in which $\phi$ is the angle of rotation, i.e., we get
$$\exp\left(\frac{\pi i}{2} - \frac{\pi i}{b-a}\right) \cdot \gamma_{a,b}^s(t).$$
Now we restrict to $t \in [0, 1/(b-a)]$; see Figure \ref{rotation}.

\begin{figure}[h!]
\begin{center}
\includegraphics[scale=0.7]{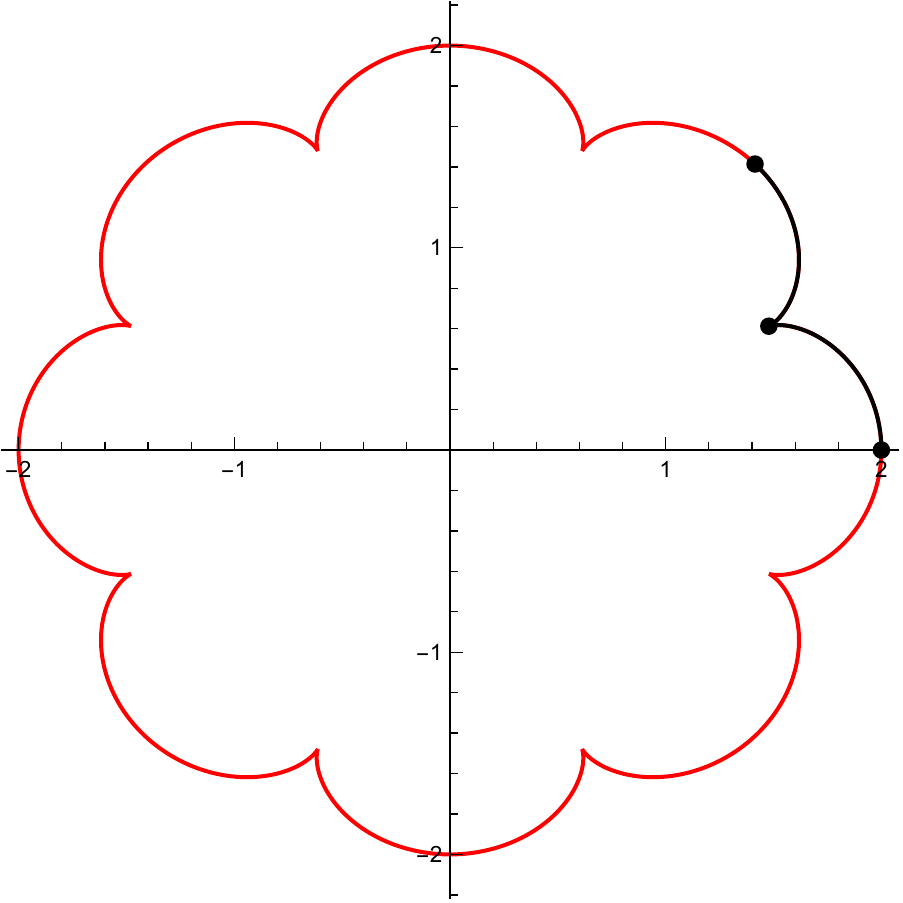} \quad \quad
\includegraphics[scale=0.7]{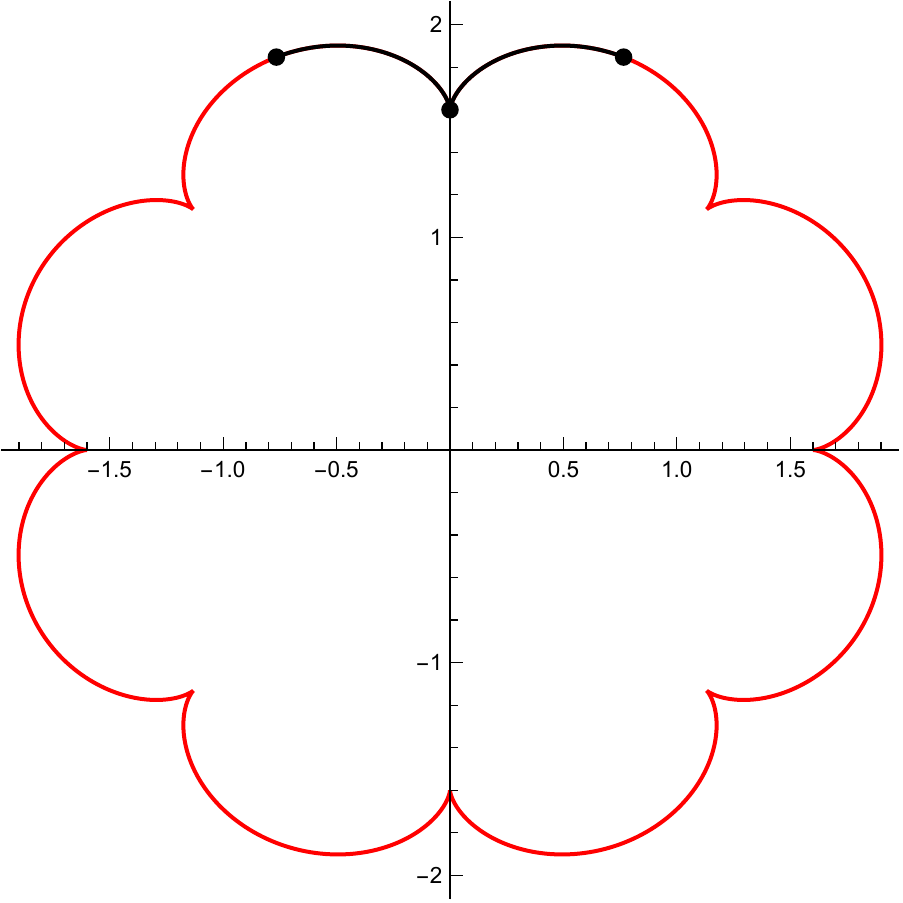}
\end{center}
\caption{Illustration of $\gamma_{1,9}^{-0.8}$ (left) as well as the rotation of the graph by $\phi=\pi/2-\pi/8$ (right). The black segment denotes the subcurve for $t\in [0,1/8]$ in both curves. The three black points are at $t=0, t=1/16, t=1/8$.} \label{rotation}
\end{figure}

As before we can switch to the representation of our curves in $\mathbb{R}^2$ and calculate the derivatives of the coordinate functions:
\begin{align*}
x_s'(t)&=-2a (1-s) \pi \cos\left( \frac{\pi}{b-a}-2a \pi t \right) - 2b (1+s) \pi \cos\left( \frac{\pi}{b-a}-2b \pi t \right)\\
y_s'(t)&=2a (1-s) \pi \sin\left( \frac{\pi}{b-a}-2a \pi t \right) + 2b (1+s) \pi \sin\left( \frac{\pi}{b-a}-2b \pi t \right).
\end{align*}
From this we can calculate the parametric derivative at the special value $s=\frac{a-b}{a+b}$ and obtain
$$\frac{y_s'(t)}{x_s'(t)} = - \tan \left( \pi \left( \frac{1}{b-a}-(a+b)t\right) \right). $$

Using the assumption $a=1$ we see that the parametric derivative is not defined for $t=\frac{1}{2(b-a)}$ when restricting to
the interval $t \in [0, 1/(b-a)]$:
The function $\tan(\phi)$ has a pole whenever $\phi=(1/2+n)\pi$ for $n \in \mathbb{Z}$. Now, it is easy to see that
$$a=1, \ t=\frac{1}{2(b-a)} \quad \Rightarrow \quad \frac{1}{b-a}-(a+b)t  = -\frac{1}{2}$$
Checking the derivative of the second coordinate function we see that $y'(t)=0$ and, hence, there is indeed a singular point.
Finally, to confirm that this point is indeed a cusp point we look again at the unit tangent vectors at this point when we approach $t$ from above and from below. 
We have that
\begin{align*}
\lim_{t\rightarrow )^+} &\frac{(\gamma_{1,b}^{s})'(t)}{\| (\gamma_{1,b}^{s})'(t)\|} = (0,1),\\
\lim_{t\rightarrow )^-} &\frac{(\gamma_{1,b}^{s})'(t)}{\| (\gamma_{1,b}^{s})'(t)\|} = (0,-1).\\
\end{align*}
Hence, we conclude that this singular point is indeed a cusp point and by symmetry we have a cusp point at every $t=\frac{h}{2(b-a)}$ for odd integers $h \in [0, \ldots, 2(b-a)]$. This finishes the proof of Theorem \ref{thm:general}.

What remains to show is that these are indeed the only cusp points and that a similar argument also works for the case $a>1$. We leave that to the interested reader. 

\section{Conclusion}
The main goal of this note was to provide a simple way to generate non-trivial examples of curves with arbitrary symmetry group $D_k$ and/or arbitrary number $k$ of cusp points as well as arbitrary winding number $k$. Our examples can be easily generated and plotted with any computer algebra system giving an abundance of different examples immediately.
We see the main application of our results in providing simple textbook and classroom examples of advanced mathematical concepts. From a mathematical point of view, an interesting future direction could be to investigate graphs of curves $\gamma_{a,b}$ for non-integer parameters $a$ and $b$.

\section*{Acknowledgements}
This work was done within the Mathematical Research department of TWT GmbH Science \& Innovation. TWT and the authors would like to thank Stefan Steinerberger for help with Lemma 2.

\end{document}